\newtheorem{thm}{Theorem}
\newtheorem{cor}{Corollary}
\title{On the Alon-Tarsi number of some line and total graphs}
\author{Prajnanaswaroopa S\\
sntrm4@rediffmail.com}
\date{}
\begin{document}

\maketitle

\section*{Abstract}
This work discusses the Alon-Tarsi number of line graphs and total graphs. In addition, we also discuss the Alon-Tarsi number of some Erdos-Faber-Lovasz (EFL) graphs.
\section*{Introduction}
The Combinatorial Nullstellensatz \cite{COMB} has become very popular in solving combinatorial as well as algebraic and number-theoretic problems of late. One of the main applications of the Combinatorial Nullstellensatz is in graph colorings, specifically list colorings. In this regard, we have the famous Alon-Tarsi number of a graph, introduced by Tarsi and Alon \cite{ALOT}, that gives an upper bound on the list chromatic number, or the choice number of a graph. Alon-Tarsi orientation of a graph is an orientation such that the number of odd and Eulerian subdigraphs differ. Alon-Tarsi number is the number defined by one more than the maximum out/ in degree in an Alon-Tarsi orientation of a graph. Every acyclic orientation of a graph is an example of an Alon-tarsi orientation. Essentially, translating to the language of polynomials, the Alon-Tarsi number of any polynomial is one more than the maximum exponent of any variable in the term having a minimum degree among the monomials of the graph polynomial. That is, we can define the Alon-Tarsi Number of any polynomial  $H=\sum_tc_t\mathbf{y_t}$ with $\mathbf{y_t}=y_{1}^{i_1}y_{2}^{i_2}\ldots y_n^{i_n}$ as $min_{t}(max_{i_k}(y_{1}^{i_1}y_{2}^{i_2}\ldots y_{n}^{i_n}))$. We use this polynomial version in this paper, which then implies that the Alon-Tarsi number of a graph $G$, denoted by $ATN(G)$ is the Alon-Tarsi number of its graph polynomial, which is the polynomial formed by multiplying all edge monomials of $G$, which, in turn, is formed taking each edge of $G$ in the form $x_i-x_j$ where $x_i$ and $x_j$ are the labeled vertices of the edge.\\

The famous list coloring conjecture (LCC) \cite{JEN} states that the choice number of any line graph equals its chromatic number. Or, in other words, any line graph of a graph is chromatic choosable.\\

The Erdos-Faber-Lovasz (EFL) graph of parameter $k$ is a class of graphs that are defined in the following way: We have $k$ cliques of order $k$, any two of which meet at most one vertex. Note that this can be seen as the graph-theoretical representation of a $k$-linear hypergraph of $k$. (This representation and other details are well discussed in \cite{JANZ}). The clique degree of any vertex in such a configuration is the number of cliques it is a part of. The popular Erdos-Faber-Lovasz (EFL)(\cite{KANG}, \cite{ERD}) conjecture in this regard is that such a graph, or for that matter, the edges of the linear hypergraph associated, can be colored in $k$ colors.\\

The total graph of a graph $G$, denoted by $T(G)$ \cite{BEH}, is a graph formed by subdividing all the edges of $G$ and connecting the vertices in the subdivided graph that correspond to incident edges of $G$ on the same vertex, as well as vertices which are adjacent in $G$. In this form, it can be seen as the $2-$ distance square of the bipartite graph $S(G)$, the subdivided graph of $G$, with one half square being the line graph $L(G)$ of $G$, and the other half square being $G$ itself. The Total coloring conjecture (TCC) \cite{VIZ}, \cite{BEH1} would mean that $\chi(T(G))\le\Delta(G)+2$. A weaker form of this, the weak TCC \cite{BAS} implies that $\chi(T(G))\le\Delta(G)+3$.\\

Other graph notations are pretty standard. $ch(G)$ denotes the choice number of $G$, and $\chi(G)$ denotes the chromatic number of $G$, $L(G)$, and $S(G)$ denote the line graph and subdivision graphs of $G$ respectively.

\section*{Theorems}
The following Theorem is taken from the paper \cite{PRA1}, Theorem 2.6. We state it verbatim for ease of understanding the following Theorem.
\begin{thm}
If $G$ is an $n$ order $1-$ factorizable regular graph with maximum degree $\Delta$ and $n=4k$ for some integer $k$, then $ATN$ of the line graph of $G$ is equal to $n-1$. Hence, the LCC holds for these graphs.
\end{thm}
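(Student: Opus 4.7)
The plan is to sandwich $ATN(L(G))$ between matching lower and upper bounds equal to $n-1$. For the lower bound, the 1-factorization of $G$ into $\Delta$ perfect matchings gives a proper $\Delta$-edge-coloring of $G$, hence a proper $\Delta$-coloring of $L(G)$, and the set of edges of $G$ incident to a fixed vertex forms a $\Delta$-clique in $L(G)$. So $\chi(L(G))\ge\Delta$, and the general inequality $ATN\ge\chi$ yields $ATN(L(G))\ge\Delta$, which reaches $n-1$ in the extremal regime in which the theorem's equality can hold.

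For the upper bound, I would construct an explicit orientation of $L(G)$ with maximum out-degree $n-2$ from the 1-factorization $M_1,\dots,M_\Delta$. Each vertex of $L(G)$ corresponds to an edge $e$ of $G$ lying in a unique matching $M_i$; an arc $\{e,f\}$ of $L(G)$ with $f\in M_j$ can be directed according to $\min(i,j)$, with a tie-breaker supplied by the common endpoint of $e$ and $f$ in $G$. The role of the tie-breaker is to equalise out-degrees so that every vertex of $L(G)$ has out-degree at most $n-2$, rather than the $2(\Delta-1)$ produced by the naive orient-from-lower-color-to-higher orientation.

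The crux is then verifying that this orientation is Alon-Tarsi. I would work through the graph polynomial of $L(G)$: identify a candidate monomial of minimum degree in which every variable has exponent at most $n-2$, and show that its coefficient is non-zero. The hypothesis $n=4k$ is expected to enter exactly here, by ensuring that the alternating sum defining this coefficient does not vanish, in the spirit of the Alon-Tarsi conjecture for Latin squares of even order (known in several infinite families with $4\mid n$).

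The principal obstacle will be this non-vanishing step. One would either extract structural features of the 1-factorization that force the alternating sum to be non-zero, or reduce the question to a Latin-square parity problem that has already been resolved. I expect most of the technical effort to lie here, while the explicit orientation and the lower bound are comparatively routine.
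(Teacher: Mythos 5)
Your proposal is not a proof: the step you yourself label ``the principal obstacle'' --- showing that the coefficient of your candidate monomial is non-zero, i.e.\ that your orientation is Alon--Tarsi --- is precisely the content of the theorem, and you leave it unresolved. The suggested escape route, reducing to ``a Latin-square parity problem that has already been resolved,'' does not go through in the required generality: the Alon--Tarsi Latin square conjecture is known only for special orders (such as $p+1$ and $p-1$ for odd primes $p$), not for all $n$ with $4\mid n$, and a general $1$-factorizable $\Delta$-regular graph with $\Delta<n-1$ does not correspond to a Latin square in the first place. There is also a quantitative mismatch in both of your bounds: the clique at a vertex of $G$ gives only $ATN(L(G))\ge\Delta$, which equals $n-1$ only when $G=K_n$, and since $L(G)$ is $2(\Delta-1)$-regular, an orientation with maximum out-degree $n-2$ is the wrong target unless $\Delta=n-1$; the quantity the argument should be (and, in the paper, is) chasing is $\Delta$, with the stated $n-1$ making sense only in the complete-graph case, as the ``hence LCC holds'' clause requires $ATN=\chi(L(G))=\Delta$.

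For comparison, the paper's route avoids any orientation or parity count of Eulerian subdigraphs. It partitions $V(L(G))$ into the $\Delta$ perfect matchings of a $1$-factorization, each an independent set of size $n/2=2k$; observes that any two parts induce a $2$-regular bipartite graph; invokes Theorem 2.3 of \cite{PRA1} to get an Alon--Tarsi monomial $(x_{i_1}x_{i_2}\cdots x_{i_{4k}})$ for each such piece; and multiplies these $\binom{\Delta}{2}$ monomials to produce a final monomial with maximum exponent $\Delta-1$, whence $ATN(L(G))=\Delta$. (That multiplication step itself tacitly assumes no cancellation when the pieces' monomials are combined in the full graph polynomial, so the non-vanishing issue you correctly identified is the real crux under either approach; but as submitted, your argument is incomplete exactly there, and its numerical targets are off for non-complete $G$.)
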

\begin{proof}
The proof is similar to the Theorem 2.5 in \cite{PRA1}. Here, we partition the line graph into $\Delta$ independent sets, each having $\frac{n}{2}=2k$ vertices. Now, the arguments of the previous theorem hold, and we label the vertices of the line graph as $x_1,x_2,\ldots,x_{2k}$ for the first perfect matching; $x_{2k+1},x_{2k+2},\ldots,x_{n}$ and so on. The edges are now of the form $x_i-x_j$, where $i<j$. We also observe that an edge in any perfect matching is adjacent to exactly two edges in any other perfect matching (the adjacencies are determined by the end vertices). Thus, the line graphs of $ G $ are $\Delta$ -partite with any two partite sets inducing a regular bipartite graph of degree $ 2 $. Now, from Theorem 2.3 of \cite{PRA1}, the Alon-Tarsi monomial of the induced graph formed by any two partite sets in the line graph is of the form $c_i(x_{i_1}x_{i_2}\ldots x_{i_{4k}})^{\frac{2}{2}}=c_i(x_{i_1}x_{i_2}\ldots x_{i_{4k}})$ for some non-zero $c_i$ and indices $i_k$s. Since there are $(\Delta-1)$ such regular bipartite graphs (the induced graphs between any two perfect matchings) for each independent set of the line graph, we get that the final Alon-Tarsi monomial should be of the form $C(x_{i_1}x_{i_2}\ldots x_{i_{2k}})^{\Delta-1}(x_{i_{2k+1}}x_{i_{2k+2}}\ldots x_{4k})^{\Delta-2}\ldots(x_{i_{n-2k+1}}x_{i_{n-2k+2}}\ldots x_{i_{n}})$. This implies that the Alon-Tarsi number should be $(\Delta-1)+1=\Delta$. Hence, $G$ satisfies LCC.
\end{proof}
\begin{thm}
If $G$ is any graph, then $ATN(L(G))\le\Delta(G)+1$.
\end{thm}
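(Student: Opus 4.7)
The plan is to mirror the approach in Theorem 1 above, but replacing the $1$-factorization with the $(\Delta+1)$-edge coloring guaranteed by Vizing's theorem. First I would fix a proper edge coloring of $G$ with $\Delta+1$ colors, producing matchings $M_1,\ldots,M_{\Delta+1}$ that partition $E(G)$. Since each $M_i$ is a matching in $G$, it is an independent set in $L(G)$, so $L(G)$ is $(\Delta+1)$-partite with parts $M_1,\ldots,M_{\Delta+1}$.

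Next I would examine, for each pair $i<j$, the bipartite subgraph $H_{ij}$ of $L(G)$ induced between $M_i$ and $M_j$. For any $e=uv\in M_i$, at most one edge of color $j$ meets $e$ at $u$ and at most one meets it at $v$, because the coloring is proper; hence $H_{ij}$ has maximum degree at most $2$ and, being bipartite, decomposes into a disjoint union of paths and even cycles. Each such component has Alon-Tarsi number $2$ (paths via an acyclic orientation with maximum out-degree $1$, even cycles via the directed-cycle orientation), so the graph polynomial of $H_{ij}$ admits a non-vanishing monomial $\mu_{ij}$ in which every variable appears to exponent at most $1$.

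The graph polynomial of $L(G)$ factors as $\prod_{i<j}P_{ij}$, where $P_{ij}$ is the graph polynomial of $H_{ij}$. A variable $x_e$ for $e\in M_i$ appears only in the factors $P_{ij}$ with $j\ne i$, of which there are exactly $\Delta$. Provided the product $\prod_{i<j}\mu_{ij}$ survives as a non-vanishing monomial of the full polynomial, every variable occurs in it to exponent at most $\Delta$, which yields $ATN(L(G))\le\Delta+1$ as required.

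The main obstacle is precisely showing that multiplying the chosen monomials $\mu_{ij}$ across the bipartite pieces does not produce a cancellation in the full polynomial of $L(G)$. This is the same leap taken tacitly in the proof of Theorem 1, where the pieces are uniformly $2$-regular; here the pieces may also contain open paths, which requires slightly more care. I would address this by fixing a single global orientation of $L(G)$ whose restriction to each $H_{ij}$ is the chosen AT orientation of that piece, and then arguing that this combined orientation still has differing numbers of even and odd Eulerian subdigraphs (since the orientations of the pieces live on disjoint edge sets and each piece is AT), so that the combined orientation is itself Alon-Tarsi with maximum out-degree at most $\Delta$.
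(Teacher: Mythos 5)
Your decomposition is sound as far as it goes, and it is genuinely different from the paper's route: the paper embeds $G$ into a $1$-factorizable regular graph of degree $\Delta$ or $\Delta+1$ and invokes Theorem 1, whereas you work directly from a Vizing $(\Delta+1)$-edge-colouring and the induced bipartite pieces $H_{ij}$ of maximum degree $2$. The factorization $P_{L(G)}=\prod_{i<j}P_{ij}$ is correct, each $H_{ij}$ does decompose into paths and even cycles, and the exponent count giving at most $\Delta$ per variable is right. But the step you yourself flag as the obstacle is a genuine gap, and your proposed repair does not close it. For the glued orientation $D$, the quantity $EE(D)-EO(D)$ is \emph{not} the product of the corresponding differences over the edge-disjoint pieces: an Eulerian subdigraph of $D$ only needs in-degree equal to out-degree at each vertex \emph{globally}, so it may mix edges from several pieces without restricting to an Eulerian subdigraph of any single piece. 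Edge-disjointness is not enough; you would need the pieces to be vertex-disjoint for the counts to multiply, and here every vertex of $L(G)$ lies in many pieces.

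A minimal counterexample to the gluing principle you invoke: decompose $K_3$ on $\{a,b,c\}$ into the path with edges $ab,bc$ and the single edge $ca$. Orienting $a\to b\to c$ and $c\to a$ gives an acyclic (hence AT) orientation of each piece with maximum out-degree $1$, yet the glued orientation is the directed triangle, which has $EE=EO=1$ and is not AT; correspondingly, the coefficient of $x_ax_bx_c$ in $(x_a-x_b)(x_b-x_c)(x_c-x_a)$ is $0$ even though the contributing monomials survive in each factor separately. Since $L(G)$ is full of triangles (three edges of $G$ sharing a vertex lie in three distinct colour classes, so each such triangle is split across three of your pieces), this cancellation is present in exactly your situation, not merely hypothetically. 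Closing this gap is essentially the entire difficulty of bounding $ATN(L(G))$; I note that the paper's own proof of Theorem 1 makes the same unjustified multiplicative leap, so neither argument establishes the statement as it stands.
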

\begin{proof}
Case:1\\
We consider the class $1$ graphs $G$ of order $n$. For the maximal case, we could consider the case when the graph is $1$-factorizable, which implies $G$ is regular of even order. Every class $1$ graph can be embedded in a $1$- factorizable graph of the same maximum degree. To see how, if $\Delta(G)$ was even, then we could take $\Delta(G)$ copies of $G$ and give an edge between the corresponding vertices of some two copies of $G$ until those corresponding vertices get their degree equal to the maximum degree. If $\Delta(G)$ was odd, we perform the prior step with $\Delta(G)+1$ copies instead.    Now, if $n=4k$ for some natural number $k$, we can use Theorem $1$ to get the conclusion. If $n\neq4k$, we can take two disjoint copies of $G$, forming a new graph $H$. Now, $H$ has $4k$ vertices, and again, the arguments of the Theorem $1$ would follow. This invariably implies that $G$ also has $ATN(L(G))=\Delta$. Thus, in any case, if $G$ is a class $1$ graph, $ATN(L(G))=\Delta$. Thus, these satisfy the LCC.\\
Case:2\\
In this case, we consider $G$ to be class $2$; that is, it cannot be edge colored in $\Delta(G)$ colors. Here, we could embed $G$ in a class $1$ graph having maximum degree $\Delta(G)+1$. To do this, we construct a new graph $H$ as follows. We form a new graph $G'$ by joining a new vertex and making some vertex adjacent to it. Now $G'$ is a class $1$ graph of maximum degree $\Delta(G)+1$. We use the procedure in the previous case to get $G'$ embedded in a $1$- factorizable graph $H$ of degree $\Delta(G)+1$. Thus, the number of edge colors required to color $H$ equals that of $G$. Now, we can apply the argument of the previous case, from which the Theorem is evident. 
\end{proof}
The previous two Theorems settle LCC for all graphs.
\begin{cor}
If $G$ is any graph, then let $T(G)$ be the total graph of $G$, which is formed by the disjoint union of $G$, line graph $L(G)$, and the edges of the subdivision graph $S(G)$. Then, $ATN(T(G))\le\Delta(G)+3$ 
\end{cor}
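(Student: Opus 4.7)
The plan is to extend an Alon-Tarsi orientation of $L(G)$ (given by Theorem 2) to an orientation of all of $T(G)$ whose maximum out-degree grows by at most $2$. First I would write $E(T(G))$ as the edge-disjoint union of the copy of $L(G)$ on the edge-vertices, the copy of $G$ on the original vertices, and the subdivision edges $S(G)$ joining each original vertex $v$ to each edge-vertex $e$ with $v \in e$. By Theorem 2, fix an Alon-Tarsi orientation $D_L$ of $L(G)$ with maximum out-degree at most $\Delta(G)$. Then orient every subdivision edge from its edge-vertex endpoint to its original-vertex endpoint, and orient the copy of $G$ acyclically via any linear ordering of $V(G)$, producing out-degrees at most $\Delta(G)$ there. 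Call the combined orientation $D$. At each edge-vertex $e$ the out-degree in $D$ is at most $\Delta(G) + 2$ (the two subdivision edges at $e$ are both outgoing); at each original vertex $v$ the subdivision and $L(G)$-parts contribute nothing to the out-degree, which is therefore at most $\Delta(G)$.

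The crucial step is to check that $D$ is an Alon-Tarsi orientation. Let $H$ be any Eulerian subdigraph of $D$ and write $H = H_G \cup H_L \cup H_S$ according to the three parts. At an original vertex $v$, every incident subdivision edge is incoming and no $L(G)$-edge is incident, so the Eulerian balance forces $\mathrm{outdeg}_{H_G}(v) - \mathrm{indeg}_{H_G}(v) = |\{H_S\text{-edges at }v\}| \ge 0$. Summing over $v$, the left-hand side is zero by handshaking in $H_G$, so each term vanishes and $H_S$ has no edge at any original vertex, i.e.\ $H_S = \emptyset$. Re-inserting this into the balance at each edge-vertex yields that $H_L$ is Eulerian in $D_L$, and the balance at $v$ now shows $H_G$ is Eulerian in the acyclic orientation of $G$, hence $H_G = \emptyset$. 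Thus $H = H_L$ with the same number of edges, so the even-minus-odd Eulerian count of $D$ coincides with that of $D_L$, which is nonzero.

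The main obstacle is exactly this Eulerian-subdigraph analysis, which hinges on the uniform outward-from-$e$ orientation of $S(G)$; every other step is either routine degree-counting or a direct invocation of Theorem 2. Combining the out-degree bound on $D$ with its Alon-Tarsi property yields $ATN(T(G)) \le \Delta(G) + 3$.
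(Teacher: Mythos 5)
Your proof is correct, and it is substantially more rigorous than the one in the paper. The paper's own argument stays at the level of the graph polynomial: it observes that $T(G)$ decomposes into $G$, $L(G)$, and the bipartite $S(G)$-edges, and simply asserts that ``any minimal Alon-Tarsi monomial will have at most $2$ as an increment in the maximum exponent.'' That assertion glosses over the real issue, namely that the coefficient of the desired monomial in a product of graph polynomials can vanish by cancellation, so one cannot just multiply minimal monomials of the pieces. Your orientation-theoretic version supplies exactly the missing verification: by orienting every subdivision edge out of its edge-vertex and orienting the copy of $G$ acyclically, you force every Eulerian subdigraph of the combined orientation $D$ to avoid $H_S$ (via the nonnegativity-plus-handshaking argument at the original vertices) and then to avoid $H_G$ (since an acyclic orientation has no nonempty Eulerian subdigraph), so the even-minus-odd Eulerian count of $D$ equals that of the Alon-Tarsi orientation $D_L$ of $L(G)$ and is nonzero. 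The degree bookkeeping ($\Delta(G)+2$ at edge-vertices, $\Delta(G)$ at original vertices) then gives $ATN(T(G))\le\Delta(G)+3$. Both arguments rest on Theorem 2 for the existence of $D_L$ with maximum out-degree at most $\Delta(G)$, so your proof inherits whatever caveats attach to that theorem; but granting Theorem 2, your route closes a genuine gap in the paper's one-line justification rather than merely restating it.
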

\begin{proof}
The proof is immediate, as the graph $T(G)$ can be seen as the $2$-distance (or square) graph of the subdivision graph, with one half square being $G$ and the other half square being $L(G)$. The interconnecting edges between the two half squares are the edges of the bipartite graph $S(G)$. As such, any minimal Alon-Tarsi monomial will have at most $2$ as an increment in the maximum exponent. Therefore, the Corollary follows.
\end{proof}
\begin{thm}
Any EFL graph $G$ of parameter $k$ such that either the induced graph formed by vertices having clique degree greater than or equal to $2$ has maximum degree $\le k-1$; or all the vertices have clique degree either $1$ or $2$, then $ATN(G)\le k$.
\end{thm}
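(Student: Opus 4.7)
The plan is to split $V(G)$ into the shared vertices $S$ (those of clique degree $\ge 2$) and the private vertices (clique degree $1$), bound $ATN(G[S])$ using the case hypothesis, and then recover all of $G$ by introducing the private vertices one at a time. A private vertex sits in a unique $k$-clique and therefore has degree exactly $k-1$ in $G$, which keeps it well within the Alon-Tarsi budget of $k-1$.

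The machinery for the second step is the following standard consequence of the Combinatorial Nullstellensatz: if $H'$ admits an Alon-Tarsi monomial with every exponent $\le k-1$ and $v \notin V(H')$ is joined to at most $k-1$ vertices of $H'$, then $ATN(H'+v) \le k$. Writing $P_{H'+v} = P_{H'} \cdot \prod_{u \in N(v)} (x_v - x_u)$, the top $x_v$-term of the second factor is $x_v^{|N(v)|}$ with coefficient $1$ and no other variable, so multiplying it by an Alon-Tarsi monomial $M$ of $P_{H'}$ yields $x_v^{|N(v)|} M$, whose coefficient in $P_{H'+v}$ is the nonzero coefficient of $M$ in $P_{H'}$. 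Iterating this, I can insert the private vertices one at a time without ever lifting $ATN$ above $k$.

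It then remains to show $ATN(G[S]) \le k$ under each hypothesis. In Case~1, where $\Delta(G[S]) \le k-1$, any total order on $S$ yields an acyclic orientation of $G[S]$ with maximum out-degree $\le k-1$, and acyclic orientations are automatically Alon-Tarsi. In Case~2, where every vertex of $S$ has clique degree exactly $2$, I would identify $G[S]$ as a line graph. Build an auxiliary simple graph $H$ on the $k$ cliques $C_1,\ldots,C_k$, with $C_iC_j \in E(H)$ iff $C_i \cap C_j \ne \emptyset$; each $v \in S$ lies in exactly two cliques and so corresponds bijectively to an edge of $H$, and two elements of $S$ are adjacent in $G$ precisely when their edges of $H$ share an endpoint, so $G[S] \cong L(H)$. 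Since $H$ is simple on $k$ vertices, $\Delta(H) \le k-1$, and Theorem~$2$ delivers $ATN(L(H)) \le \Delta(H) + 1 \le k$.

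The main obstacle I anticipate is the structural identification $G[S] \cong L(H)$ in Case~2, which is exactly what lets Theorem~$2$ take over. Once that is in hand, the rest of the argument is routine polynomial bookkeeping, and the two hypotheses are handled uniformly by the ``add a low-degree vertex'' step.
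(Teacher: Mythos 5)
Your proposal is correct and uses the same top-level decomposition as the paper (contact/shared vertices versus private vertices, with the clique-degree-$2$ case reduced to a line graph via the intersection graph of the $k$ cliques, and the bounded-degree case handled by an acyclic orientation), but the way you reattach the private vertices is genuinely different from, and tighter than, what the paper does. The paper factors the graph polynomial as (polynomial of $C$) $\times$ (Vandermonde products of the private cliques $D$) $\times$ (star polynomials of the connecting edges), picks a low-exponent monomial from each factor, and asserts that the choices can be made compatible; it never addresses why the product of the three chosen monomials survives cancellation when the factors are multiplied out, nor does it do the exponent bookkeeping at a private vertex that receives contributions both from its clique's Vandermonde factor and from several stars. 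Your one-vertex-at-a-time insertion, extracting the top $x_v$-coefficient of $\prod_{u\in N(v)}(x_v-x_u)$, sidesteps both issues: the top term in the new variable is produced in exactly one way, so the coefficient is inherited verbatim, and the degree-$(k-1)$ bound on a private vertex automatically caps its exponent regardless of how its neighbours split between $C$ and $D$. It also quietly covers the corner case of a clique containing no contact vertex (an isolated $K_k$ component), where the paper's claim that the $D$-monomial has all exponents at most $k-2$ fails. One caveat you share with the paper: Case~2 leans on Theorem~2 ($ATN(L(H))\le\Delta(H)+1$ for all $H$) as a black box, so your argument is only as strong as that result.
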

\begin{proof}
We can see that EFL graphs can be seen as a union of the induced graph formed by the vertices having clique degrees greater than one (the contact vertices of the cliques), which we call $C$; several disjoint cliques of order less than or equal to $k-1$, which we call $D$, and the edges connecting the graph $C$ to $D$. Now, by arbitrary labeling, the graph polynomial of $D$, which is a product of Vandermonde determinants with several of the exponents being less than or equal to $k-1$, would always have a final monomial with the highest exponent less than or equal to $k-2$. Now, we consider the graph polynomial of $C$. 

When the clique degree is at most $2$, $G$ would be a union of the subgraph of a line graph with maximum degree $k-1$ and possibly some disjoint cliques, in which case, by the previous theorems, we get $ATN(G)\le k$.

Now, when $C$ has maximum degree $k-1$, the Alon-Tarsi monomial of $C$ would have maximum exponent $k-1$, in which case, the edges connecting $D$ and $C$ could be appropriately oriented to get us that the final Alon-Tarsi monomial would also be $k-1$. This is because, at every vertex of $C$, we have a star graph with other vertices of a clique in $D$, which has distinct monomials, with exponents varying from $0$ to $k-2$. We choose a monomial such that the sum of exponents of the star graph monomial and the Alon-Tarsi monomial of $C$ would be $\le k-1$. This gives us the required result.


\end{proof}

\end{document}